\def\act#1#2%
\newcommand{\F}{{\mathbb F}}
\newcommand{\PP}{{\mathbb P}}
\newcommand{\calC}{{\mathcal C}}
\newcommand{\calN}{{\mathcal N}}
\newcommand{\calT}{{\mathcal T}}
\newcommand{\calV}{{\mathcal V}}
\newtheorem{Theorem}{Theorem}[section]
\newtheorem{Proposition}[Theorem]{Proposition}
\newtheorem{Remark}[Theorem]{Remark}
\begin{document}

\title{Subsets of $\PP^4$ with no four points on a plane}

\author{Geertrui Van de Voorde}
\address{School of Mathematics and Statistics, University of Canterbury, Private Bag 4800, Christchurch 8140, New Zealand}
\email{geertrui.vandevoorde@canterbury.ac.nz}

\author{Jos\'e Felipe Voloch}
\address{School of Mathematics and Statistics, University of Canterbury, Private Bag 4800, Christchurch 8140, New Zealand}
\email{felipe.voloch@canterbury.ac.nz}
\urladdr{http://www.math.canterbury.ac.nz/\~{}f.voloch}

\keywords{}
\subjclass{51E22, 51E21}

\begin{abstract}
We describe a new construction of a subset of $\PP^4$ with no four points on a plane over any finite field of order $q$ in which 3 is not a square. This set has size $2q+1$, is maximal with respect to inclusion, and is the largest known such set.
\end{abstract}

\maketitle
%

\section{Introduction}

The purpose of this paper is to study subsets of $\PP^4$ over a finite field, with no four (distinct) points on a plane. These sets are known as {\em tracks} or {\em $4$-general sets}. There is an extensive literature on subsets of projective spaces with restrictions on their intersections with (linear) subspaces. Particularly, the case of no three points on a line ({\em caps}) or no $n+1$ points on a hyperplane in $\PP^n$ ({\em arcs}) have received special attention, while the intermediate cases, less so, and the case of four points on a plane in $\PP^4$ is the first such intermediate case. For a survey on the most recent results on tracks,  see \cite{zbMATH08027981}. A track is called \emph{complete} if it is not a subset of a larger track. There are similar notions for arcs and caps.

Besides their intrinsic interest, tracks are important because of their connection with error correcting codes. In \cite{deboer} (based on \cite{bose}), De Boer shows that tracks in $\mathbb{P}^N$ are equivalent to {\em almost MDS codes} (AMDS), which are $[l,k,d]$-codes with $d=l-k$.  
If the dual of an AMDS code is AMDS too, the code is {\em near MDS} (or NMDS). In that case, the corresponding track in $\mathbb{P}^N$ satisfies the additional property that every $N+2$ points are in general position.
 
It is known (see e.g. \cite{book}) that using elliptic curves, one can construct NMDS codes in $\mathbb{P}^N$ over $\F_q$, $q=p^m$, $p$ prime, of length $n$, and hence, tracks of size $n$, where

\[n=
\begin{cases}
q+\lfloor 2\sqrt{q}\rfloor & \text{if}\ p\mid \lfloor 2\sqrt{q}\rfloor \text{and}\ m\geq 3,\ m\ \text{odd}\\
q+\lfloor 2\sqrt{q}\rfloor+1 & \text{otherwise}.
\end{cases}
\]

 While the maximum length of an NMDS code of dimension $k$ is upper bounded by $2q+k$ (see \cite[Theorem 3.5]{dodunekov}), no such upper bound which is linear in $q$ is known for AMDS codes (see also Remark \ref{rem:upper}).

We denote a point in $\PP^4$ over a field $k$ by $(x_0,x_1,x_2,x_3,x_4), x_i \in k$. It is easy to see that the normal rational curve $\mathcal{N}=\{(1,t,t^2,t^3,t^4) \mid t \in k \} \cup \{(0,0,0,0,1)\}$ is a track for any field $k$; it clearly satisfies the stronger property that every five points are in general position (i.e., $\mathcal{N}$ is an arc). While over a finite field of order $q$, $q\geq 5$, it is complete as an arc (see \cite{completearc} for $q \ge 8$), this set is never complete as a track, as $(0,0,0,1,0)$ can always be added to it. As we will show, when our main result applies, there exists a complete track with $2q+1$ points properly containing it. 

Prior to our result, the largest known tracks in $\PP^4$ were the ones obtained from NMDS codes as described earlier. In \cite{giulietti}, it is shown that for large enough $q$, elliptic NMDS codes based on an elliptic curve with $j$-invariant different from $0$, are not extendable for $N>4$ and at most $2$-extendable when $N=4$. This leads to examples of tracks with at most $q+\lfloor 2\sqrt{q}\rfloor+3$ points over $\F_q$.

Our main result is as follows:

\begin{Theorem}
\label{thm:main}
If $3$ is not a square in $\F_q$, then the set
$$\{(1,t,t^2,t^3,t^4)\mid t\in \F_q\} \cup \{(0,1,2t,3t^2,4t^3) \mid t\in \F_q\} \cup \{(0,0,0,0,1)\}$$
is a complete track of size $2q+1$.
\end{Theorem}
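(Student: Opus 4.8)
The plan is to establish the size and distinctness first, then the track property by a finite case analysis organised through projective duality, and finally completeness by a covering argument. The hypothesis already forces the characteristic to be at least $5$: in characteristic $2$ every element is a square, and in characteristic $3$ we have $3=0$. Hence $2,3,4$ are invertible, and the three listed families are pairwise disjoint with $q$, $q$, $1$ members: the first has $x_0=1$, the second has $x_0=0,\ x_1=1$, and the third has $x_0=x_1=0$, while within each family distinct parameters give distinct points (e.g. $x_2=2t$ separates the second family). Thus $|S|=2q+1$.

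For the track property I would dualise. Identify a hyperplane of $\PP^4$ with a nonzero polynomial $g(x)=c_0+c_1x+\cdots+c_4x^4$ of degree $\le 4$; then this hyperplane contains $P(t):=(1,t,t^2,t^3,t^4)$ iff $g(t)=0$, contains $Q(t):=(0,1,2t,3t^2,4t^3)$ iff $g'(t)=0$, and contains $R:=(0,0,0,0,1)$ iff $\deg g\le 3$. Four distinct points are coplanar iff they are linearly dependent, iff the space of polynomials $g$ satisfying the corresponding conditions is at least $2$-dimensional. I would then run through the nine type-combinations of four points (according to how many lie in each family) and show in every case that this solution space is exactly $1$-dimensional. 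The combinations involving $R$, together with $AAAA$, $BBBB$, $AAAB$ and $ABBB$, are immediate by root-counting: four $B$-points force the degree-$\le 3$ polynomial $g'$ to have four roots, hence $g$ is constant; three $A$-points force $g=(x-a_1)(x-a_2)(x-a_3)(\alpha x+\beta)$ and the remaining single condition is readily seen to be nontrivial. I would use that the affine group $t\mapsto \alpha t+\beta$ acts on $S$ preserving the three families and fixing $R$, to normalise parameters in the remaining case.

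The crux is $AABB$: points $P(a_1),P(a_2),Q(b_1),Q(b_2)$. After normalising $a_1=0,\ a_2=1$, write $g=x(x-1)q(x)$ with $q=\alpha x^2+\beta x+\gamma$; the conditions $g'(b_i)=0$ become the rows $(4b_i^3-3b_i^2,\ 3b_i^2-2b_i,\ 2b_i-1)$ of a $2\times 3$ matrix, and the four points are dependent iff this matrix has rank $\le 1$. Computing the three $2\times 2$ minors and using $b_1\ne b_2$, I expect the rank to drop precisely when $b_1+b_2=1$ and $b_1b_2=1/6$, i.e. when $b_1,b_2$ are the roots of $6x^2-6x+1$, whose discriminant is $12$. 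Those roots lie in $\F_q$ iff $3$ is a square. Hence the non-squareness of $3$ is exactly what forbids the only dependent $AABB$-configuration, so $S$ is a track; the same computation shows the hypothesis is sharp.

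For completeness I must show that every $Y\in\PP^4\setminus S$ lies on a plane spanned by three points of $S$, so that $S\cup\{Y\}$ fails to be a track. I would project from $R$: a plane through $R,s_1,s_2$ contains $Y$ iff the image $\bar Y$ in $\PP^3$ (drop the last coordinate) lies on the line $\overline{\bar s_1\,\bar s_2}$. Under this projection the first family maps to the twisted cubic $\{(1,a,a^2,a^3)\}$ and the second to its derivative conic $\{(0,1,2b,3b^2)\}$, whose chordal variety fills $\PP^3$. So completeness reduces to showing that every $\F_q$-point of $\PP^3$ lies on a secant line joining two $\F_q$-points of this enlarged configuration; the contact points are cut out by an at-most-quadratic equation, and the points whose cubic-secant has conjugate (irrational) contact must be recovered using the derivative (tangent-direction) points and tangent lines, with any remaining points handled by trisecant planes not through $R$. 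I expect this covering step to be the main obstacle: unlike the track property, which is a finite list of determinant checks, it requires a genuinely global count over $\F_q$ arranged so that no point of the space is missed.
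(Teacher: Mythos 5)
Your dualisation and nine-case analysis of the track property is correct, and in substance it is the same argument as the paper's: Propositions \ref{prop:one-point} and \ref{prop:two-points} there likewise identify hyperplanes with quartic polynomials $g$ and translate membership of $(0,1,2t,3t^2,4t^3)$ into $g'(t)=0$. Your normalised $AABB$ computation (the quadratic $6x^2-6x+1$ with discriminant $12$) is the same discriminant condition as the paper's $3(s-t)^2$, just viewed from the dual side, and your root-counting disposals of the other eight cases are sound. The genuine problem is completeness, which you yourself flag as ``the main obstacle'': what you offer there is a plan, not a proof, and the plan as stated would fail. Projecting from $R=(0,0,0,0,1)$ only detects planes through $R$, and these do not suffice: no line in $\PP^3$ joining two points of the projected configuration $\{(1,t,t^2,t^3)\}\cup\{(0,1,2b,3b^2)\}$ passes through $(0,0,0,1)$ (any point with first coordinate $0$ on such a line has second coordinate $\ne 0$, as a short check shows), so no point $(0,0,0,\lambda,\mu)$ with $\lambda\neq 0$ lies on a plane through $R$ and two other points of the set. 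Such points must instead be handled inside $H_\infty$; the paper does this by invoking the fact that $\calV\cup\{R\}$ is a twisted cubic that is complete as an arc in $\PP^3$ for odd $q\ge 7$.

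More seriously, the affine points whose unique chord of the twisted cubic has conjugate contact points --- roughly half of the affine points --- must be covered by the mixed secants, and this is exactly the global count you defer. In the paper this step (Theorem \ref{thm:complete}) is the hardest part of the whole argument: for a point $(1,a,b,c,d)$, the existence of a suitable plane spanned by two points of $\calV\cup\{R\}$ and one point of $\calN$ is reduced to finding $u\in\F_q$ for which $3F(u)$ is a nonzero square, where $F$ is an explicit degree-$10$ polynomial in $u$ with coefficients depending on $a,b,c,d$. One then needs (i) a nontrivial elimination argument showing $F$ is not a perfect square, so that $v^2=3F(u)$ defines an irreducible hyperelliptic curve of genus at most $4$; (ii) the Hasse--Weil bound, which produces the required point only when $q\ge 89$; and (iii) computer verification for the remaining small $q$ --- where, as the paper notes, this particular family of covering planes genuinely fails for $q=5,7,17,31$ and completeness holds only via other planes. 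None of these ingredients, nor any substitute for them, appears in your proposal; and since the theorem claims completeness for every $q$ in which $3$ is a non-square, the small-$q$ cases cannot be dismissed either.
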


\begin{Remark}
\label{rem:upper}
The best known upper bound for the size of a track is roughly $\sqrt{2} q^{3/2}$ so there still remains a huge gap between this bound and the lower bound $2q+1$ from Theorem \ref{thm:main}. To obtain the upper bound, one can argue as follows:
For a track $\calT$, consider the lines $\overline{xy}$ through distinct points $x,y \in \calT$. 
Then the sets $\overline{xy} \setminus \{x,y\}$ are pairwise disjoint and each have $q - 1$ points not in $\calT$, so

$$ (q - 1) {|\calT| \choose 2} \leq |\PP^4 \setminus \calT| = q^4 + q^{3} + q^2+q + 1 - |\calT|. $$

This gives the required bound.
\end{Remark}

\subsection*{Acknowledgements}
The second author was supported by the Marsden Fund administered by the Royal
Society of New Zealand. Both authors thank BIRS and the organisers of the workshop in Ramsey Theory and Finite Geometry in September 2025 for inviting us, and Jacques Verstraete for asking a question that inspired this paper as well as the argument in Remark \ref{rem:upper}. We acknowledge the use of Pari/GP, Magma, Maple and GAP for symbolic computations and ChatGPT for answering a more conceptual question, detailed in Remark \ref{rem:chatgpt}.

\section{Proof of main result}
\label{sec:main}

In what follows, we use $$\mathcal{N}=\{(1,t,t^2,t^3,t^4)\mid t\in \F_q\}  \cup  \{(0,0,0,0,1)\},$$ and $$\mathcal{V}=\{(0,1,2t,3t^2,4t^3)\mid t\in \F_q\}.$$ The hyperplane at infinity, $H_\infty$, is given by the equation $x_0=0$.  
\begin{Proposition}
\label{prop:one-point}
Assume that we are working over a field $\F_q$ of characteristic $p \ne 2,3$. 
Let $P=(0,1,2t,3t^2,4t^3)$ be a point of $\mathcal{V}$. Then $\mathcal{N}\cup \{P\}$ is a track in $\PP^4$.
\end{Proposition}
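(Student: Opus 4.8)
The plan is to exploit the projective duality between points of $\mathbb{P}^4$ and linear forms, which lets me replace the geometric condition ``four points lie on a plane'' by a statement about low-degree polynomials. Recall that $\mathcal{N}$ is an arc, so any three of its points are linearly independent and already span a plane. Hence a coplanar quadruple in $\mathcal{N}\cup\{P\}$ must consist of $P$ together with three points $Q_1,Q_2,Q_3\in\mathcal{N}$, and---because $\langle Q_1,Q_2,Q_3\rangle$ is itself a plane---the quadruple is coplanar precisely when $P\in\langle Q_1,Q_2,Q_3\rangle$. Thus it suffices to show that $P$ never lies in the span of three points of $\mathcal{N}$.

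To control this span I would pass to the dual description. A linear form $c_0x_0+\dots+c_4x_4$ corresponds to a polynomial $f(X)=c_0+c_1X+\dots+c_4X^4$ of degree at most $4$; it vanishes at $(1,s,s^2,s^3,s^4)$ iff $f(s)=0$, at $(0,0,0,0,1)$ iff $\deg f\le 3$, and at $P=(0,1,2t,3t^2,4t^3)$ iff $f'(t)=0$, where $f'$ is the formal derivative (note $c_1+2c_2t+3c_3t^2+4c_4t^3=f'(t)$). The four points $P,Q_1,Q_2,Q_3$ span a subspace of projective dimension at most $2$, i.e.\ are coplanar, exactly when the space of forms vanishing on all four has dimension at least $2$; equivalently, when the space of polynomials $f$ of degree $\le 4$ with $f(s_1)=f(s_2)=f(s_3)=0$ and $f'(t)=0$ is at least $2$-dimensional.

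First I would treat the case that $Q_1,Q_2,Q_3$ are the finite points with parameters $s_1,s_2,s_3$. Any such $f$ factors as $f(X)=h(X)(aX+b)$ with $h(X)=(X-s_1)(X-s_2)(X-s_3)$, so these polynomials form a $2$-dimensional space parametrised by $(a,b)$, and the condition $f'(t)=0$ is the single linear equation $h'(t)(at+b)+a\,h(t)=0$ in $(a,b)$. This equation is identically zero only if both $h'(t)=0$ and $t\,h'(t)+h(t)=0$, which together force $h(t)=h'(t)=0$, i.e.\ $t$ is a repeated root of $h$. Since $s_1,s_2,s_3$ are distinct, $h$ has only simple roots, so this cannot happen; hence $f'(t)=0$ cuts the dimension down to $1$ and the quadruple is not coplanar. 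When one of the $Q_i$ is the point at infinity, the same computation applies with $h(X)=(X-s_1)(X-s_2)$ and $\deg f\le 3$, and again $t$ would have to be a repeated root of $h$, which is impossible.

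The one genuinely load-bearing point is this final impossibility: the tangency condition $f'(t)=0$ can never be redundant, because a product of distinct linear factors has no repeated root. Everything else is bookkeeping, and the argument is uniform in $t$ and makes no use of whether $3$ is a square---that hypothesis enters only later, when several points of $\mathcal{V}$ must be added simultaneously. I would close by noting that $P$ has $x_0=0$ and $x_1=1$, hence is distinct from every point of $\mathcal{N}$, so the quadruples under consideration genuinely consist of four distinct points.
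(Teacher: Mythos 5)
Your proof is correct, and it shares the paper's central mechanism---the duality under which a hyperplane $[c_0,\dots,c_4]$ corresponds to the polynomial $f(X)=c_0+c_1X+\cdots+c_4X^4$, so that containing $(1,s,s^2,s^3,s^4)$ means $f(s)=0$, containing $(0,0,0,0,1)$ means $\deg f\le 3$, and containing $P$ means $f'(t)=0$---but you deploy it differently. The paper argues by contradiction and splits according to whether the offending plane $\pi$ contains $P_\infty=(0,0,0,0,1)$: if it does, it runs an explicit echelon-form computation on the $4\times 5$ coordinate matrix, ending in $(u-s)^2=0$ (a step that divides by $2$); if it does not, it picks two hyperplanes $H_a,H_b$ through $\pi$, writes $f_a=(x-\alpha)p(x)$ and $f_b=(x-\beta)p(x)$ with $p$ the cubic vanishing at the three points of $\calN\cap\pi$, and extracts a contradiction from $f_a'(t)=f_b'(t)=0$. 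You instead prove non-coplanarity directly by a dimension count that treats both cases at once: the forms vanishing at the three chosen points of $\calN$ form the two-dimensional space $\{h(X)(aX+b)\}$ (with $h$ quadratic and the extra constraint $\deg f\le 3$ when $P_\infty$ is among the three points, $h$ cubic otherwise), and the functional $f\mapsto f'(t)$ cannot vanish identically on this space, since that would force $h(t)=h'(t)=0$, making $t$ a repeated root of the squarefree $h$. This buys a cleaner and more uniform argument: the point at infinity needs no separate matrix computation, and nothing divides by $2$ or $3$, so your argument in fact establishes the proposition in every characteristic---something the paper only asserts, without proof, in the remark following Proposition \ref{prop:two-points}. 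What you forgo is the explicit coordinate computation, whose style the paper deliberately reuses in Proposition \ref{prop:two-points} and Theorem \ref{thm:complete}.
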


\begin{proof}
We know that $\mathcal{N}$ is a track. 
Assume, by contradiction, that there is a point $P=(0,1,2t,3t^2,4t^3)\in \mathcal{V}$ that cannot be added to $\calN$ and maintain it being a track. Then there is a plane $\pi$ through $P$ meeting $\calN$ in three points. First assume that $P_\infty=(0,0,0,0,1)$ is in $\pi$. It follows that the rank of the following matrix is $3$ for some choice of $s\neq u$:
\[\begin{bmatrix} 0&0&0&0&1\\0&1&2t&3t^2&4t^3\\1&s&s^2&s^3&s^4\\1&u&u^2&u^3&u^4 \end{bmatrix}.\]

The echelon form of this matrix is 

\[\begin{bmatrix} 1&s&s^2&s^3&s^4\\0&1&2t&3t^2&4t^3\\0&0&u^2-s^2+2t(s-u)&u^3-s^3+3t^2(s-u)&u^4-s^4+4t^3(s-u)\\0&0&0&0&1 \end{bmatrix}\]
so it has rank $3$ if and only if
\begin{align*}
(u-s)(u+s-2t)&=0\\
(u-s)(u^2+us+s^2-3t^2)&=0.
\end{align*}
Since $u\neq s$, it follows from the first equation that $t=\frac{u+s}{2}$. Substituting this value of $t$ into $3t^2=u^2+us+s^2$ yields $(u-s)^2=0$, a contradiction.

This shows that no plane though a point of $\mathcal{V}$ and $(0,0,0,0,1)$ contains $3$ points of $\mathcal{N}$. Now assume that $P_\infty=(0,0,0,0,1)$ is not in $\pi$. Let $H_a=[a_0,a_1,a_2,a_3,1]$ and $H_b=[b_0,b_1,b_2,b_3,1]$ be two distinct hyperplanes through $\pi$, and note that none of these contains $P_\infty$. The polynomial $f_a(x)=a_0+a_1x+a_2x^2+a_3x^3+x^4$ has roots exactly in the points of $\calN\setminus\{P_\infty\}$ contained in $H_a$. Since $\pi$ lies in $H_a$ and $H_b$, and contains $3$ points of $\calN\setminus\{P_\infty\}$, we have that $f_a(x)=(x-\alpha)p(x)$ and $f_b(x)=(x-\beta)p(x)$ where $p(x)$ is a monic cubic polynomial with three distinct roots, corresponding to the points of $\pi\cap \calN$, and $\alpha\neq \beta$. The point $P(0,1,2t,3t^2,4t^3)$ lies on $H_a$, which implies that $a_1+2a_2t+3a_3t^2+4t^3=0$, that is $f'_a(t)=0$. Similarly, $f'_b(t)=0$. Since $f'_a(x)=-\alpha p(x)+xp'(x)$ and $f'_b(x)=-\beta p(x)+xp'(x)$, it follows that $\alpha p(t)=tp'(t)=\beta p(t)$. Since $\alpha\neq \beta$, it follows that $p(t)=0$. This implies that $f_a(t)=0=f_b(t)$, and since $f'_a(t)=0=f'_b(t)$, we have that $t$ is a double root of $f_a$ and of $f_b$. But since $p(x)$ has three distinct roots, this implies that $\alpha=t=\beta$, a contradiction.
\end{proof}

\begin{Proposition}
\label{prop:two-points} 
Let $P\neq Q$ be two points of $\mathcal{V}$.
If $3$ is not a square in $\F_q$, then $\mathcal{N}\cup \{P,Q\}$ is a track in $\PP^4$.
\end{Proposition}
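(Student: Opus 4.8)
The plan is to reduce to the only genuinely new configurations and then to a single discriminant computation. Since $\mathcal{N}$ is a track and $\mathcal{N}\cup\{P\}$, $\mathcal{N}\cup\{Q\}$ are tracks by Proposition~\ref{prop:one-point} (the hypothesis forces $\Char\F_q\neq 2,3$, so that proposition applies), any four coplanar points of $\mathcal{N}\cup\{P,Q\}$ must consist of both $P$ and $Q$ together with exactly two points of $\mathcal{N}$. Writing $P=(0,1,2s,3s^2,4s^3)$ and $Q=(0,1,2u,3u^2,4u^3)$ with $s\neq u$, there are two cases for the two points of $\mathcal{N}$: either both are finite, say $R_a=(1,a,a^2,a^3,a^4)$ and $R_b=(1,b,b^2,b^3,b^4)$ with $a\neq b$, or one of them is $P_\infty=(0,0,0,0,1)$.

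The case involving $P_\infty$ is immediate: I would row-reduce the $4\times 5$ matrix with rows $P,Q,P_\infty,R_a$. Since $R_a$ is the only row with nonzero first coordinate, the rank equals $1$ plus the rank of $P,Q,P_\infty$ in the last four coordinates; as $(1,2s,3s^2,4s^3)$, $(1,2u,3u^2,4u^3)$ and $(0,0,0,1)$ are independent whenever $s\neq u$, the matrix has rank $4$, so these four points are never coplanar.

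The substance is the finite case. Four points are coplanar exactly when their representing vectors are linearly dependent, which dually says that the space of polynomials $f$ of degree $\le 4$ with $f(a)=f(b)=0$ and $f'(s)=f'(u)=0$ has dimension $\ge 2$ (this mirrors the $f,f'$ bookkeeping in the proof of Proposition~\ref{prop:one-point}, where $f'(t)=0$ records that $(0,1,2t,3t^2,4t^3)$ lies on the hyperplane $f$). Writing such an $f$ as $f=(x-a)(x-b)\,Q(x)$ with $\deg Q\le 2$, the two conditions $f'(s)=f'(u)=0$ become two linear equations in the three coefficients of $Q$, and coplanarity becomes the statement that the $2\times 3$ coefficient matrix has rank $\le 1$, i.e. that all of its $2\times 2$ minors vanish. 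I would compute two of these minors in terms of $\sigma=a+b$, $\rho=ab$, $S=s+u$, $U=su$; after dividing out the common factor $s-u$ they read $2\rho=6U-3\sigma S+2\sigma^2$ together with a cubic in $\sigma$ which, upon substituting the first relation, factors as $(\sigma-S)(\sigma^2-2S\sigma+4U)=0$.

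The payoff is reading off the two branches. On the branch $\sigma=S$ the first relation gives $\rho=3U-\tfrac12 S^2$, so the discriminant of $a,b$ is
\[
(a-b)^2=\sigma^2-4\rho=3\bigl(S^2-4U\bigr)=3(s-u)^2 ,
\]
whence $3=\bigl((a-b)/(s-u)\bigr)^2$ would be a square in $\F_q$, contradicting the hypothesis. On the branch $\sigma^2-2S\sigma+4U=0$ one finds $\sigma=S\pm(s-u)$, i.e. $\sigma\in\{2s,2u\}$, and the first relation then forces $\rho=s^2$ or $\rho=u^2$; in either case $a,b$ collapse to the double value $s$ or $u$, contradicting $a\neq b$. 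Hence no such plane exists and $\mathcal{N}\cup\{P,Q\}$ is a track. I expect the main obstacle to be precisely this finite case: setting up the linear-algebra reduction cleanly and carrying out the minor computation so that the factorization—and with it the decisive identity $(a-b)^2=3(s-u)^2$—emerges, since this is exactly where the non-squareness of $3$ enters.
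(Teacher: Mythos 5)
Your proof is correct, and it shares the paper's overall skeleton: reduce via Proposition \ref{prop:one-point} to a plane through $P$, $Q$ and exactly two points of $\calN$, dispatch the case involving $(0,0,0,0,1)$, translate coplanarity into conditions on quartic polynomials (vanishing at the two $\calN$-parameters, derivative vanishing at the two $\calV$-parameters), and conclude because the quadratic cutting out the two $\calN$-points has discriminant $3$ times a nonzero square. Where you genuinely differ is in how the equations on that quadratic are extracted. The paper picks two hyperplanes $H_a$, $H_b$ through the plane, writes the corresponding quartics as $f_a=h\,r_a$, $f_b=h\,r_b$, and uses the ad hoc combinations $r_b(t)f_a-r_a(t)f_b$ and $r_b(s)f_a-r_a(s)f_b$ to manufacture quartics of the form $h(x)(x-t)^2$ and $h(x)(x-s)^2$; differentiating then determines the coefficients of $h$ \emph{uniquely}, with no case analysis. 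You instead encode coplanarity as a rank-$\le 1$ condition on the $2\times 3$ matrix of the linear conditions $f'(s)=f'(u)=0$ acting on the three-dimensional space $\{(x-a)(x-b)Q(x):\deg Q\le 2\}$, and set two minors to zero. I checked your computations: the minor from the last two columns gives $2\rho=6U-3\sigma S+2\sigma^2$, the other minor after substitution becomes $-2(\sigma-S)(\sigma^2-2S\sigma+4U)$, the branch $\sigma=S$ yields $(a-b)^2=3(s-u)^2$ (the paper's discriminant identity $a^2-4b=3(s-t)^2$ in its notation), and the branch $\sigma^2-2S\sigma+4U=0$ collapses the two $\calN$-points to a single point, contradicting distinctness. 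Your route is more systematic---no clever auxiliary combination is needed, and the rank formulation makes the logical structure (coplanarity $\Rightarrow$ vanishing minors) transparent---at the cost of the extra degenerate branch, which the paper's setup avoids by building the distinctness of $\gamma,\delta$ into the factorization $h(x)(x-t)^2$. Your opening observation that the hypothesis ``$3$ is not a square'' already forces $\Char\F_q\neq 2,3$ (so Proposition \ref{prop:one-point} applies without further assumption) is a nice point the paper leaves implicit.
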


\begin{proof}
Assume, by contradiction, that there are two distinct points $P=(0,1,2t,3t^2,4t^3)$,\linebreak $Q=(0,1,2s,3s^2,4s^3)$ that cannot be added to $\calN$. Then there is a plane $\pi$ through $P$ and $Q$ meeting $\calN$ in two points. If $\pi$ contains $(0,0,0,0,1)$, then $\pi$ is contained in $H_\infty$ and it does not contain any further points of $\calN$. So we may consider two hyperplanes $H_a$ and $H_b$ through $\pi$, not containing the point $P_\infty$, which correspond to a quartic polynomial $f_a(x)$, resp. $f_b(x)$. Since $P$ and $Q$ are in $H_a\cap H_b$, we find that $f'_a(s)=f'_a(t)=0$ and $f'_b(s)=f'_b(t)=0$. The polynomials $f_a(x)$ and $f_b(x)$ have two common roots, say $\gamma,\delta$, corresponding to the intersection points of $\pi$ with $\calN$. We see that $f_a(x)=(x-\gamma)(x-\delta)r_a(x)=h(x)r_a(x)$, and $f_b(x)=(x-\gamma)(x-\delta)r_b(x)=h(x)r_b(x)$. If we consider $F(x)=r_b(t)f_a(x)-r_a(t)f_b(x)$ and $G(X)=r_b(s)f_a(x)-r_a(s)f_b(x)$, we see that $F(t)=F'(t)=0$ and $G(s)=G'(s)=0$. If we divide $F(x)$ and $G(x)$ by their leading coefficient, and denote the resulting monic polynomials by $\tilde{F}(x)$ and $\tilde{G}(x)$, we find that $$\tilde{F}(x) = h(x)(x-t)^2, \tilde{G}(x)=h(x)(x-s)^2.$$
Furthermore, we have that $\tilde{F}'(s)=0$ and $\tilde{G}'(t)=0$ and hence,

$$2h(t)(t-s)+h'(t)(t-s)^2 = 2h(s)(s-t)+h'(s)(s-t)^2 = 0.$$

Writing $h(x)=x^2+ax+b$,  it follows that 
\begin{align*}
2t^2+2at+2b+(2t+a)(t-s)&=0\\
2s^2+2as+2b+(2s+a)(s-t)&=0,
\end{align*}
and hence $a=-(s+t)$ and $b=st-\frac{(s-t)^2}{2}$. Since $h$ has $2$ roots, $\gamma$ and $\delta$, the discriminant $a^2-4b=3(s-t)^2$ is a square; hence, if $3$ is not a square, we find a contradiction.
\end{proof}

\begin{Remark}
A variant of Proposition \ref{prop:two-points} works in characteristics $2$ and $3$. In the case of characteristic $3$, the discriminant of the polynomial $h$ in the proof is $0$, so it has a double root which also leads to a contradiction. In characteristic $2$, the discriminant is not relevant but a slight variation of the argument carries through. Proposition \ref{prop:one-point} also works in characteristics $2, 3$ but Theorem \ref{thm:construction} does not because the added points are contained in the line $x_0=x_2=x_4=0$ in characteristic $2$, and in the plane $x_0=x_3=0$ in characteristic $3$. 
\end{Remark}

\begin{Theorem}
\label{thm:construction}
If $3$ is not a square in $\F_q$, then the set
$$\mathcal{N}\cup \mathcal{V}=\{(1,t,t^2,t^3,t^4)\mid t\in \F_q\} \cup \{(0,1,2t,3t^2,4t^3) \mid t\in \F_q\} \cup \{(0,0,0,0,1)\}$$
is a track of size $2q+1$.
\end{Theorem}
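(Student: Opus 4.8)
The plan is to argue by contradiction: suppose four distinct points of $\mathcal{N}\cup\mathcal{V}$ lie on a plane $\pi\subset\PP^4$, and classify the possibilities according to how many of these four points lie in $\mathcal{V}$ (counting $P_\infty=(0,0,0,0,1)$ as a point of $\mathcal{N}$). If at most two of them lie in $\mathcal{V}$, then all four already lie in $\mathcal{N}\cup\{P,Q\}$ for suitable $P,Q\in\mathcal{V}$, and the configuration is forbidden by the fact that $\mathcal{N}$ is a track together with Propositions \ref{prop:one-point} and \ref{prop:two-points}. (Here the hypothesis that $3$ is not a square is exactly what lets us invoke Proposition \ref{prop:two-points}; note also that $3$ being a non-square forces $\Char\F_q\neq 2,3$, since in characteristic $2$ or $3$ the element $3$ equals $1$ or $0$, both squares.) So it remains to rule out the two cases in which at least three of the four coplanar points lie in $\mathcal{V}$.

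The key observation for these remaining cases is that $\mathcal{V}$ and $P_\infty$ all lie in the hyperplane $H_\infty:\,x_0=0$, which is a copy of $\PP^3$, and that inside it $\mathcal{V}\cup\{P_\infty\}$ is a normal rational curve. Indeed, dropping the identically-zero coordinate $x_0$, the point $(0,1,2t,3t^2,4t^3)$ becomes $(1,2t,3t^2,4t^3)$, and rescaling the coordinates by $1,\tfrac12,\tfrac13,\tfrac14$ (all defined since $\Char\F_q\neq 2,3$) turns $\mathcal{V}\cup\{P_\infty\}$ into the twisted cubic $\{(1,t,t^2,t^3)\mid t\in\F_q\}\cup\{(0,0,0,1)\}$. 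I would then record the standard fact that this is an arc in $\PP^3$: four of its points are coplanar exactly when the relevant $4\times4$ coordinate matrix is singular, and that determinant is a Vandermonde determinant (up to the nonzero scalar $24$, resp. $6$ when $P_\infty$ is among the points), hence nonzero whenever the four parameters are distinct. In particular $\mathcal{V}\cup\{P_\infty\}$ has no four points on a plane of $\PP^3$ and, a fortiori, no three points on a line.

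To finish, I would use a dimension count. Suppose three points of $\mathcal{V}$ (possibly together with $P_\infty$), or four points of $\mathcal{V}$, lie on a plane $\pi\subset\PP^4$. If $\pi\subseteq H_\infty$, then these points lie on a plane of the $\PP^3=H_\infty$, contradicting that $\mathcal{V}\cup\{P_\infty\}$ is an arc there. If $\pi\not\subseteq H_\infty$, then since $\dim\pi=2$ and $\dim H_\infty=3$ inside $\PP^4$, the intersection $\pi\cap H_\infty$ is a single line; but then at least three points of $\mathcal{V}\cup\{P_\infty\}$ would be collinear, again contradicting the arc property. This disposes of the case of three points of $\mathcal{V}$ together with $P_\infty$, and of four points of $\mathcal{V}$. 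The only remaining subcase is three points of $\mathcal{V}$ together with a finite point $R=(1,u,u^2,u^3,u^4)$ of $\mathcal{N}$; here $R\notin H_\infty$ forces $\pi\not\subseteq H_\infty$, so again $\pi\cap H_\infty$ is a line carrying all three points of $\mathcal{V}$, contradicting that no three points of $\mathcal{V}$ are collinear.

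Finally, I would note that the $2q+1$ points are genuinely distinct: the $q$ finite points of $\mathcal{N}$ have $x_0=1$, while the $q$ points of $\mathcal{V}$ and $P_\infty$ have $x_0=0$ and are distinguished by their remaining coordinates (using $\Char\F_q\neq 2$ to recover $t$ from the coordinate $2t$). Hence the track has the claimed size. The only real content beyond Propositions \ref{prop:one-point} and \ref{prop:two-points} is the recognition of $\mathcal{V}\cup\{P_\infty\}$ as a twisted cubic in $H_\infty$; once that is in hand, the main (and only mild) obstacle is bookkeeping the position of $\pi$ relative to $H_\infty$, which the dimension count settles cleanly, while keeping track of the fact that the subcase $\pi\subseteq H_\infty$ genuinely needs the full arc property rather than merely the absence of three collinear points.
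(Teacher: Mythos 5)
Your proof is correct and takes essentially the same approach as the paper: the cases with at most two of the four coplanar points in $\mathcal{V}$ are handled by the track property of $\mathcal{N}$ together with Propositions \ref{prop:one-point} and \ref{prop:two-points}, and the cases with at least three points in $\mathcal{V}$ are ruled out by recognizing $\mathcal{V}\cup\{P_\infty\}$ as a normal rational cubic (arc) in $H_\infty$. Your write-up merely makes explicit some details the paper leaves implicit, such as the rescaling/Vandermonde verification of the arc property, the dimension count for $\pi\cap H_\infty$, and the observation that $3$ being a non-square forces $\Char\F_q\neq 2,3$.
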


\begin{proof} We already know that 
$\mathcal{N}$
is a track. The set $\calV$ 
is also a track: $\mathcal{V}\cup \{(0,0,0,0,1)\}$ forms a normal rational (cubic) curve in the hyperplane $H_\infty$, and hence, no four points of $\mathcal{V}\cup \{(0,0,0,0,1)\}$ are coplanar. 
A plane cannot meet $\mathcal{N}$ in $3$ points and $\mathcal{V}$ in $1$ point by Proposition \ref{prop:one-point} and a plane cannot meet $\mathcal{N}$ in $2$ points and $\mathcal{V}$ in $2$ points Proposition \ref{prop:two-points}. A plane meeting $\mathcal{V}$ in $3$ is points is contained in the hyperplane $H_\infty$ so clearly does not meet $\mathcal{N}$.
\end{proof}

\begin{Theorem} \label{thm:complete} If $3$ is not a square in $\F_q$ and $ q \ge 89$, the track $$\mathcal{N}\cup \mathcal{V}=\{(1,t,t^2,t^3,t^4)\mid t\in \F_q\} \cup \{(0,1,2t,3t^2,4t^3) \mid t\in \F_q\} \cup \{(0,0,0,0,1)\}$$ is complete.
\end{Theorem}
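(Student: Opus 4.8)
The plan is to show that $\calN\cup\calV$ is complete by proving that every point $R=(R_0,\dots,R_4)\in\PP^4$ lying outside the track sits on some plane containing three points of the track; adjoining $R$ would then produce four coplanar points. Since $\calN\cup\calV$ is already a track (Theorem \ref{thm:construction}), every plane meets it in at most three points, so it suffices to exhibit a single $3$-secant plane through each such $R$. I would hunt for this plane among the trisecant planes of the rational normal curve $\calN$ alone, writing $c(t)=(1,t,t^2,t^3,t^4)$ throughout.

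The key reduction is to linearise the trisecancy condition. Using the pairing sending a polynomial $f=\sum_{i}a_ix^i$ of degree $\le4$ and a point $x=(x_0,\dots,x_4)$ to $\langle f,x\rangle=\sum_i a_ix_i$ (so that $\langle f,c(t)\rangle=f(t)$), the hyperplanes through $c(t_1),c(t_2),c(t_3)$ are exactly those whose associated quartic is divisible by $g=(x-t_1)(x-t_2)(x-t_3)$, i.e.\ the span of $g$ and $xg$. Hence $R\in\langle c(t_1),c(t_2),c(t_3)\rangle$ if and only if $\langle g,R\rangle=\langle xg,R\rangle=0$. Writing $g=x^3-e_1x^2+e_2x-e_3$, these are the two \emph{linear} equations $R_3-e_1R_2+e_2R_1-e_3R_0=0$ and $R_4-e_1R_3+e_2R_2-e_3R_1=0$ in $(e_1,e_2,e_3)$, whose solution set is generically a line. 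The admissible cubics therefore form a pencil $g_\lambda(x)=x^3+A(\lambda)x^2+B(\lambda)x+C(\lambda)$ with $A,B,C$ affine-linear in $\lambda$, and $R$ is blocked the moment some $g_\lambda$ splits into three distinct linear factors over $\F_q$.

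To locate such a $\lambda$ I would parametrise the pencil by a root: since $g_\lambda(x)=0$ is linear in $\lambda$, solving gives $\lambda=\rho(x)$ for an explicit rational function $\rho$ of degree $3$, so for each $x\in\F_q$ the cubic $g_{\rho(x)}$ has $x$ as a root with an explicit cofactor quadratic $Q_x$. The cubic splits completely with distinct roots precisely when $\disc Q_x$ is a nonzero square and $Q_x(x)\ne0$. This recasts the problem as finding an $\F_q$-point with $w\ne0$ on the auxiliary curve $\mathcal C_R\colon w^2=D_R(x)$, where $D_R$ is the numerator of $\disc Q_x$, a polynomial in $x$ of bounded degree. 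Granted that $\mathcal C_R$ is geometrically irreducible of genus bounded independently of $R$, the Hasse--Weil estimate supplies about $q+1-2g\sqrt q$ points, and discarding the $O(1)$ exceptional values of $x$ (poles of $\rho$, double roots) leaves a genuine trisecant as soon as $q\ge89$.

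Finally I would dispatch the exceptional configurations: the points $R$ for which the two linear equations in $(e_1,e_2,e_3)$ fail to be independent (a codimension-$2$ locus, where the family of admissible cubics is only larger, hence easier), the points of $H_\infty$ (where one may instead work inside $\PP^3=H_\infty$, using that $\calV\cup\{(0,0,0,0,1)\}$ is a twisted cubic and invoking its trisecant planes, with the points of $\calV$ as extra blockers), and the finitely many $R$ for which $D_R$ is a perfect square so that $\mathcal C_R$ degenerates into rational components (for which split fibres are in fact abundant). I expect the genuine obstacle to be not any single Hasse--Weil application but the \emph{uniformity}: proving geometric irreducibility and a genus bound for $\mathcal C_R$ valid for all $R$ at once, together with a uniform count of the exceptional $x$-values. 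It is exactly this uniform accounting that should pin down the explicit threshold $q\ge89$.
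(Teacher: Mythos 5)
Your reduction to trisecant planes of $\calN$ alone cannot work, and the failure sits exactly in the case you dismiss as harmless. Suppose $R$ is an affine point lying on a chord of $\calN$ joining two conjugate points $c(\tau),c(\tau^q)$ with $\tau\in\F_{q^2}\setminus\F_q$. Any plane $\pi$ through three distinct points of $\calN$ that contains $R$ must contain that chord: otherwise the chord and $\pi$ meet only in $R$, so their span is (at most) a hyperplane containing five distinct points of the rational normal quartic, contradicting the fact that a hyperplane meets it in at most four points. Hence every trisecant plane through such an $R$ contains the two non-rational points $c(\tau),c(\tau^q)$; in your language, the pencil of admissible cubics is $\{q_0(x)(x-t_3)\}$ with $q_0$ a fixed $\F_q$-irreducible quadratic, so $D_R$ is a \emph{non-square constant times a square}, $\mathcal{C}_R$ has no point with $w\neq 0$, and no blocking plane of your type exists at all. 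These $R$ are not finitely many: there are about $q^2/2$ such ``imaginary'' chords, carrying about $q^3/2$ rational affine points off the track. The same happens on the tangent developable of $\calN$: if $R=c(t_0)+s\,c'(t_0)$ with $s\neq 0$, then a plane through $R$ and three points of $\calN$ would span, together with $c(t_0)$, a hyperplane meeting $\calN$ with multiplicity at least $5$, so no trisecant plane through $R$ exists even over $\overline{\F}_q$ (there $D_R\equiv 0$); this surface again contains $\Theta(q^2)$ affine points off the track. So both of your claims about the degenerate locus --- that it is finite, and that split fibres are abundant on it --- are false, and for all these points any blocking plane must use points of $\calV$, which your construction never invokes outside $H_\infty$. (Your other exceptional case is vacuous: the two linear conditions on $(e_1,e_2,e_3)$ are dependent precisely when $R$ lies on $\calN$ itself, since the $2\times 2$ minors are the quadrics cutting out the rational normal curve.)

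This is precisely why the paper uses a different family of planes for affine points: those spanned by \emph{two} points of $\calV\cup\{(0,0,0,0,1)\}$ and \emph{one} point of $\calN$, a family designed (see Remark \ref{rem:chatgpt}) so that the obstruction above disappears. Even then, the Hasse--Weil step is not the hard part; the technical heart --- which you explicitly defer in your own setting --- is proving that the auxiliary polynomial ($F(u)$ of degree $10$ in the paper, your $D_R$) is never a perfect square, uniformly in $R$, using the standing reductions ($b\neq a^2$ and the unsolvability of equation \eqref{eq:denominator}). Without the analogous statement, your argument is incomplete even for points $R$ off the secant variety and tangent developable of $\calN$, where your method could in principle apply; with it, you would still need a separate argument (necessarily involving $\calV$) for the $\Theta(q^3)$ points where your method provably cannot apply.
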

\begin{proof}
In \cite{completearc}, it is shown that, for odd $q\geq 7$, a normal rational curve is complete as arc in $3$-space. 
Therefore, we know that no point of $H_\infty$ can be added. We will show that, if $q\geq 89$, every point of $\PP^4$ lies on a plane spanned by two points of $\{(0,1,2t,3t^2,4t^3) \mid t\in \F_q\} \cup \{(0,0,0,0,1)\}$ and one point of $\{(1,t,t^2,t^3,t^4)\mid t\in \F_q\} $.

To show this, we need to show that for every choice of $a,b,c,d$, there are $s,t,u$ with $s\neq t$ such that the rank of the following matrix $A$ is $3$

\[A=\begin{bmatrix} 1&a&b&c&d\\
0&1&2t&3t^2&4t^3\\
0&1&2s&3s^2&4s^3\\
1&u&u^2&u^3&u^4
\end{bmatrix},\]
or such that there are $t,u$ such that the rank of the following matrix $B$ is $3$
\[B=\begin{bmatrix} 1&a&b&c&d\\
0&1&2t&3t^2&4t^3\\
0&0&0&0&1\\
1&u&u^2&u^3&u^4
\end{bmatrix}.\]

The echelon form of $B$ is

\[\begin{bmatrix}
1 & a & b & c & d 
\\
 0 & 1 & 2 t & 3 t^{2} & 4 t^{3} 
\\
 0 & 0 & 2 a t-2 t u+u^{2}-b & 3 t^{2} a-3 t^{2} u+u^{3}-c & 4 t^{3} a-4 t^{3} u+u^{4}-d 
\\
 0 & 0 & 0 & 0 & 1 
\end{bmatrix}.\]
Therefore, the rank of $B$ is $3$ if and only if we can find $u,t$ such that 
\begin{align*}
 2 a t-2 t u+u^{2}-b&=0\\
 3 t^{2} a-3 t^{2} u+u^{3}-c&=0.
 \end{align*}
 
If $(1,a,b,c,d)=(1,a,a^2,a^3,d)$ then this is clearly the case. Otherwise, there is a solution to this system if and only if there is a $u$ which is a solution to 
\begin{equation}
\label{eq:denominator}
 u^4-4au^3+6bu^2-4cu+4ac-3b^2=0.
\end{equation}
(This follows from substituting $t=\frac{b-u^2}{2(a-u)}$ in the second equation which reads $3(a-u)t^2=c-u^3$.)

 Hence, from now on, we may assume that equation (\ref{eq:denominator}) does not have any solutions in $u$. Note that if $b=a^2$, the equation $u^4-4au^3+6bu^2-4cu+4ac-3b^2=0$ has the solution $u=a$, so we assume that $b\neq a^2$.

The echelon form of $A$ is

\[\begin{bmatrix}1 & a & b & c & d 
\\
 0 & 1 & 2 t & 3 t^{2} & 4 t^{3} 
\\
 0 & 0 & 2 s-2 t & 3 s^{2}-3 t^{2} & 4 s^{3}-4 t^{3} 
\\
 0 & 0 & 0 & A_{44}&A_{45}\end{bmatrix}\]
where
\[A_{44}= -3 a s t+3 s t u-\frac{3}{2} s \,u^{2}-\frac{3}{2} t \,u^{2}+u^{3}+\frac{3}{2} b s+\frac{3}{2} b t-c \] and
\[A_{45}=-4 a \,s^{2} t-4 t^{2} a s+4 s^{2} t u-2 s^{2} u^{2}+4 t^{2} u s-2 s t \,u^{2}-
2 t^{2} u^{2}+u^{4}+2 b \,s^{2}+2 b s t+2 b \,t^{2}-d. \]

Therefore, since $s\neq t$, the rank of $A$ is $3$ if and only if we can find $s\neq t$ and $u$ such that 
$A_{44}(s,t,u)=0$ and $A_{45}(s,t,u)=0$.

This is 
\begin{align}
g=\frac{3}{2}\left(s+t\right)\left(u^{2}-b\right)+3 s t \left(u-a\right)-c+u^{3}&=0 \label{eqg}\\
f=t^2(4s(u-a)-2(u^2-b))+t(4s^2(u-a)-2s(u^2-b))-2s^2(u^2-b)+u^4-d&=0.
\end{align}
Now consider 
\begin{align*}j&=3(u-a)f-(4(u-a)(s+t)+6(u^2-b))g=\\
&\left(4 a \,u^{3}-u^{4}-6 b \,u^{2}-4 a c+3 b^{2}+4 c u\right) \left(s+t\right)+u^{5}-3 u^{4} a+2 b \,u^{3}+2 u^{2} c-3 d u+3 d a-2 b c.\end{align*} 
Then we know that $j=0$.

 Since we assumed that $u^{4}-4 a u^{3}+6 b u^{2}+4 a c-3 b^{2}-4 c u \neq 0$, the previous equation yields $$s+t=\frac{-3 u^{4} a+u^{5}+2 b \,u^{3}+2 u^{2} c+3 d a-2 b c-3 d u}{u^{4}-4 a u^{3}+6 b u^{2}+4 a c-3 b^{2}-4 c u}.$$ Using Equation \ref{eqg}, we find that $$st=\frac{u^{6}-9 b \,u^{4}+16 c \,u^{3}-9 d \,u^{2}+9 b d-8 c^{2}}{6(u^{4}-4 a u^{3}+6 b u^{2}+4 a c-3 b^{2}-4 c u)}.$$

Since $s,t$, the solutions to the equation $X^2-(s+t)X+st=0$, need to be in $\F_q$ and distinct, we need to find $u$ such that  $D=(s+t)^2-4st$ is a non-zero square.
This is the case if and only if $9 D \left(u^{4}-4 a u^{3}+6 b u^{2}+4 a c-3 b^{2}-4 c u\right)^{2}$ is a square. The latter equals $3 F(u)$, where 

\begin{align*}F(u)&=u^{10}-10 a \,u^{9}+\left(27 a^{2}+18 b\right) u^{8}+\left(-108 a b-12 c\right) u^{7}+\left(84 a c+126 b^{2}\right) u^{6}-252 b c \,u^{5}+\\
&\quad \left(-54 a^{2} d+108 a c b-54 b^{3}+54 b d+156 c^{2}\right) u^{4}+\left(\left(108 b d-192 c^{2}\right) a+72 b^{2} c-108 c d\right) u^{3}+\\
&\quad \left(108 a c d-162 b^{2} d+72 b \,c^{2}+27 d^{2}\right) u^{2}+\left(-54 a \,d^{2}+108 b c d-64 c^{3}\right) u+\\
&\quad 27 a^{2} d^{2}+\left(-108 b c d+64 c^{3}\right) a+54 b^{3} d-36 b^{2} c^{2}.\end{align*}
 
 Hence, we need to find a point $(u,v)$ in $AG(2,q)$ on the curve $\mathcal{C}$ defined by $3F(u)=v^2$. If $F(u)$ is not a perfect square, this curve is irreducible and is a hyperelliptic curve of genus at most $[(\deg F - 1)/2] = 4$. The Hasse-Weil bound gives that the nonsingular model of $\mathcal{C}$ has at least $q+1 - 8\sqrt{q}$ points. The two points at infinity of $\mathcal{C}$ are not rational, as the leading coefficient of $3F(u)$ is not a square by hypothesis. Note that we have assumed that equation (\ref{eq:denominator}) does not have any solutions in $\F_q$, so we do not have to worry about the values of $u$ where the denominator of $D$ is zero. We need to exclude the (at most) $10$ points where $F(u)=0$. So if $q+1-8\sqrt{q}\geq 10$, i.e. if $q\geq 89$, we find a point $(u,v)$ on $\mathcal{C}$ giving rise to a solution $s\neq t\in \F_q$.

 We will now show that $F(u)$ is indeed not a perfect square.
Assume to the contrary that $F(u)=(u^5+\lambda_4 u^4+\lambda_3u^3+\lambda_2 u^2+\lambda_1 u+\lambda_0)^2$, then we find the following equations

\begin{align}
-10a& = 2\lambda_4\\
27a^2 + 18b &= \lambda_4^2 + 2\lambda_3\\
-108ab - 12c &= 2\lambda_4\lambda_3+ 2\lambda_2\\
84ac + 126b^2 &= 2\lambda_4\lambda_2 + \lambda_3^2 + 2\lambda_1\\
-252bc &= 2\lambda_4\lambda_1 + 2\lambda_3\lambda_2+ 2\lambda_0\\
-54a^2d + 108abc - 54b^3 + 54bd + 156c^2 &= 2\lambda_4\lambda_0 + 2\lambda_3\lambda_1+ \lambda_2^2 \label{eq6}\\
(108bd - 192c^2)a + 72b^2c - 108cd &= 2\lambda_3\lambda_0 + 2\lambda_2\lambda_1\label{eq7}\\
108acd - 162b^2d + 72bc^2 + 27d^2 &= 2\lambda_2\lambda_0 + \lambda_1^2\label{eq8}\\
-54ad^2 + 108bcd - 64c^3 &= 2\lambda_1\lambda_0\label{eq9}\\
27a^2d^2 + (-108bcd + 64c^3)a + 54b^3d - 36b^2c^2 &= \lambda_0^2\label{eq10}
\end{align}
The first five equations uniquely determine $\lambda_0,\ldots,\lambda_4$; we find that 
\begin{align*}
\lambda_4&=-5a\\
\lambda_3&= a^2+9b\\
\lambda_2&=5a^3-9ab-6c \\
\lambda_1&= \frac{49}{2}a^4-54ba^2+12ca+\frac{45}{2}b^2\\
\lambda_0&=\frac{1}{2}(235a^5 - 612ba^3 + 132ca^2 + 387b^2a -144cb).
\end{align*}

Equation \eqref{eq6} gives
 $$\left(-54 a^{2}+54 b\right) d+1101 a^{6}-3303 a^{4} b+696 c a^{3}+2781 a^{2} b^{2}-936 a b c-459 b^{3}+120 c^{2}=0$$ so, since $b-a^2\neq 0$, this equation uniquely determines $d$ as a function of $a,b,c$; we have
 $$d= \frac{367a^6 - 1101a^4b + 232a^3c + 927a^2b^2 - 312abc - 153b^3 + 40c^2}{18(a^2 - b)}.$$

Substituting $d$ into Equation \eqref{eq7} gives us
$-240\frac{(a^6 - 3a^4b + 4a^3c - 6abc + 3b^3 + c^2)(2a^3 - 3ab + c)}{a^2 - b}=0.$
so either $2a^3 - 3ab + c=0$ or $a^6 - 3a^4b + 4a^3c - 6abc + 3b^3 + c^2=0$.
In the former case, Equation \eqref{eq8} simplifies to $\frac{135}{2}(a^2-b)^4=0$, a contradiction since $b\neq a^2$.
Hence, we may assume that $a^6 - 3a^4b + 4a^3c - 6abc + 3b^3 + c^2=0$.
This equation has a solution in $c$ if and only if the discriminant $\bar{D}=3(a^2-b)^3$ is a square, and in that case, $c = -2a^3 + 3ab \pm \sqrt{3(a^2-b)^3}$. But substituting those values for $c$ into Equation \eqref{eq8} shows that $\frac{11247}{2}(a^2-b)^4=0$, a final contradiction.

\end{proof}
\paragraph{\bf Proof of Theorem \ref{thm:main}.}
Theorems \ref{thm:construction} and \ref{thm:complete} together show that Theorem \ref{thm:main} holds for $q \ge 89$.
The remaining cases are dealt with by computer calculations. We note that
in the proof of Theorem \ref{thm:complete}, we showed that, for all $q\geq 89$, all affine points are covered by planes spanned by two points at infinity from $\calV$ and one affine point from $\calN$. This is not true in general: a computer calculation shows that this is not true for $q=5,7,17,31$ but that the track is still complete for those values of $q$.

\begin{Remark}
\label{rem:chatgpt}
We asked ChatGPT the following question:
Let $\calN$ be the normal rational curve of degree $4$ in $4$-dimensional projective space and $L$ a line not meeting $\calN$ in the same projective space. Can you describe the set of planes containing $L$ that meet $\calN$ in at least a point and also the subset consisting of those planes that meet $\calN$ in two points?

It replied with a long description as well as the following summary: the set of planes containing $L$ is a ${\PP}^2$; those that meet $\calN$ form a plane quartic $\calC\subset{\PP}^2$ (the projection of $\calN$ from $L$); the planes that meet $\calN$ in two distinct points are precisely the finitely many nodes of $\calC$ (generically three of them).

This is correct (except in characteristic $2$, where the projection can be inseparable and the image a conic). We do not use this result in our proof but it motivated us to consider the set $\calV$ of ``derivatives'' of $\calN$ to force the curve $\calC$ to have at least two cusps (and thus, at most one node) when the line joins two points of $\calV$ as in Proposition \ref{prop:two-points}. The fact that the rationality of the remaining node depends only on the quadratic character of $3$ (and not on the choice of points of $\calV$ spanning the line) was an unexpected pleasant surprise that falls out of a calculation but for which we do not have a conceptual explanation.
\end{Remark}

\begin{bibdiv}

\begin{biblist}

\bib{bose}{article}{
 author = {Bose, R. C.},
author = { Bush, K. A.},
 title = {Orthogonal arrays of strength two and three},
 journal = {Ann. Math. Stat.},
 issn = {0003-4851},
 volume = {23},
 pages = {508--524},
 year = {1952},
 language = {English},
}

 \bib{deboer}{article}{
 author = {De Boer, M.A.},
 title = {Almost {MDS} codes},
 journal = {Des. Codes Cryptography},
 issn = {0925-1022},
 volume = {9},
 number = {2},
 pages = {143--155},
 year = {1996},
 language = {English},
}

\bib{dodunekov}{article}{
 author = {Dodunekov, S.}, 
author = {Landgev, I.},
 title = {On near-{MDS} codes},
 journal = {J. Geom.},
 issn = {0047-2468},
 volume = {54},
 number = {1-2},
 pages = {30--43},
 year = {1995},
 language = {English},
}

\bib{giulietti}{article}{
 author = {Giulietti, M.},
 title = {On the extendibility of near-{MDS} elliptic codes},
 fjournal = {Applicable Algebra in Engineering, Communication and Computing},
 journal = {Appl. Algebra Eng. Commun. Comput.},
 issn = {0938-1279},
 volume = {15},
 number = {1},
 pages = {1--11},
 year = {2004},
 language = {English},
}

\bib{zbMATH08027981}{article}{
 author = {Pavese, F.},
 title = {On 4-general sets in finite projective spaces},
 journal = {J. Algebr. Comb.},
 issn = {0925-9899},
 volume = {61},
 number = {2},
 pages = {19},
 note = {Id/No 27},
 year = {2025},
 language = {English},
}

\bib{book}{book}{
 author = {Tsfasman, M. A.},
author = { Vl{\v{a}}du{\c{t}}, S. G.},
 title = {Algebraic-geometric codes. {Transl}. from the {Russian}},
 series = {Math. Appl., Sov. Ser.},
 volume = {58},
 isbn = {0-7923-0727-5},
 year = {1991},
 publisher = {Dordrecht etc.: Kluwer Academic Publishers},
 language = {English},
}

\bib{completearc}{article}{
 author = {Seroussi, G.}
 author={Roth, R. M.},
 title = {On {MDS} extensions of generalized {Reed}-{Solomon} codes},
 fjournal = {IEEE Transactions on Information Theory},
 journal = {IEEE Trans. Inf. Theory},
 issn = {0018-9448},
 volume = {32},
 pages = {349--354},
 year = {1986},
 language = {English},
}

\end{biblist}
\end{bibdiv}


\end{document}